\begin{document}

\newtheorem{theorem}{Theorem}
\newtheorem{lemma}[theorem]{Lemma}
\newtheorem{lem}[theorem]{Lemma}
\newtheorem{thm}[theorem]{Theorem}
\newtheorem{claim}[theorem]{Claim}
\newtheorem{cor}[theorem]{Corollary}
\newtheorem{prop}[theorem]{Proposition}
\newtheorem{definition}{Definition}
\newtheorem{question}[theorem]{Question}
\newtheorem{conj}[theorem]{Conjecture}
\newcommand{\hh}{{{\mathrm h}}}

\numberwithin{equation}{section}
\numberwithin{theorem}{section}

\def\sssum{\mathop{\sum\!\sum\!\sum}}
\def\ssum{\mathop{\sum\ldots \sum}}

\def \balpha{\boldsymbol\alpha}
\def \bbeta{\boldsymbol\beta}
\def \bgamma{{\boldsymbol\gamma}}
\def \bomega{\boldsymbol\omega}

\newcommand{\Res}{\mathrm{Res}\,}
\newcommand{\Gal}{\mathrm{Gal}\,}

\def\sssum{\mathop{\sum\!\sum\!\sum}}
\def\ssum{\mathop{\sum\ldots \sum}}
\def\dsum{\mathop{\sum\  \sum}}
\def\iint{\mathop{\int\ldots \int}}

\def\squareforqed{\hbox{\rlap{$\sqcap$}$\sqcup$}}
\def\qed{\ifmmode\squareforqed\else{\unskip\nobreak\hfil
\penalty50\hskip1em\null\nobreak\hfil\squareforqed
\parfillskip=0pt\finalhyphendemerits=0\endgraf}\fi}

\newfont{\teneufm}{eufm10}
\newfont{\seveneufm}{eufm7}
\newfont{\fiveeufm}{eufm5}
%
%
\newfam\eufmfam
     \textfont\eufmfam=\teneufm
\scriptfont\eufmfam=\seveneufm
     \scriptscriptfont\eufmfam=\fiveeufm
%
%
\def\frak#1{{\fam\eufmfam\relax#1}}

\def\fK{\mathfrak K}
\def\fT{\mathfrak{T}}

\def\fA{{\mathfrak A}}
\def\fB{{\mathfrak B}}
\def\fC{{\mathfrak C}}
\def\fD{{\mathfrak D}}
\def\fM{{\mathfrak M}}

\newcommand{\sX}{\ensuremath{\mathscr{X}}}

\def\eqref#1{(\ref{#1})}

\def\vec#1{\mathbf{#1}}
\def\dist{\mathrm{dist}}
\def\vol#1{\mathrm{vol}\,{#1}}

\def\squareforqed{\hbox{\rlap{$\sqcap$}$\sqcup$}}
\def\qed{\ifmmode\squareforqed\else{\unskip\nobreak\hfil
\penalty50\hskip1em\null\nobreak\hfil\squareforqed
\parfillskip=0pt\finalhyphendemerits=0\endgraf}\fi}

\def\sA{\mathscr A}
\def\sB{\mathscr B}
\def\sC{\mathscr C}
\def\sD{\Delta}
\def\sE{\mathscr E}
\def\sF{\mathscr F}
\def\sG{\mathscr G}
\def\sH{\mathscr H}
\def\sI{\mathscr I}
\def\sJ{\mathscr J}
\def\sK{\mathscr K}
\def\sL{\mathscr L}
\def\sM{\mathscr M}
\def\sN{\mathscr N}
\def\sO{\mathscr O}
\def\sP{\mathscr P}
\def\sQ{\mathscr Q}
\def\sR{\mathscr R}
\def\sS{\mathscr S}
\def\sU{\mathscr U}
\def\sT{\mathscr T}
\def\sV{\mathscr V}
\def\sW{\mathscr W}
\def\sX{\mathscr X}
\def\sY{\mathscr Y}
\def\sZ{\mathscr Z}

\def\cA{{\mathcal A}}
\def\cB{{\mathcal B}}
\def\cC{{\mathcal C}}
\def\cD{{\mathcal D}}
\def\cE{{\mathcal E}}
\def\cF{{\mathcal F}}
\def\cG{{\mathcal G}}
\def\cH{{\mathcal H}}
\def\cI{{\mathcal I}}
\def\cJ{{\mathcal J}}
\def\cK{{\mathcal K}}
\def\cL{{\mathcal L}}
\def\cM{{\mathcal M}}
\def\cN{{\mathcal N}}
\def\cO{{\mathcal O}}
\def\cP{{\mathcal P}}
\def\cQ{{\mathcal Q}}
\def\cR{{\mathcal R}}
\def\cS{{\mathcal S}}
\def\cT{{\mathcal T}}
\def\cU{{\mathcal U}}
\def\cV{{\mathcal V}}
\def\cW{{\mathcal W}}
\def\cX{{\mathcal X}}
\def\cY{{\mathcal Y}}
\def\cZ{{\mathcal Z}}
\newcommand{\rmod}[1]{\: \mbox{mod} \: #1}

\def\rE{\mathrm E}
\def\rS{\mathrm S}

\def\vr{\mathbf r}

\def\e{{\mathbf{\,e}}}
\def\ep{{\mathbf{\,e}}_p}
\def\em{{\mathbf{\,e}}_m}
\def\en{{\mathbf{\,e}}_n}

\def\Tr{{\mathrm{Tr}}}
\def\Nm{{\mathrm{Nm}}}
\def\supp{{\mathrm{supp}}}

\def\gmax{g}
\def\ve{\varepsilon}
\def\rot{\operatorname{rot}}
\def\ord{\operatorname{ord}}

\def\lcm{{\mathrm{lcm}}}
\def\lcm{{\mathrm{lcm}}}

\def \ovFp{\overline{\F}_p}

\def\({\left(}
\def\){\right)}
\def\fl#1{\left\lfloor#1\right\rfloor}
\def\rf#1{\left\lceil#1\right\rceil}

\definecolor{olive}{rgb}{0.3, 0.4, .1}
\definecolor{dgreen}{rgb}{0.,0.5,0.}

\def\mand{\qquad \mbox{and} \qquad}

\newcommand{\commSK}[1]{\marginpar{%
\begin{color}{red}
\vskip-\baselineskip 
\raggedright\footnotesize
\itshape\hrule \smallskip SK: #1\par\smallskip\hrule\end{color}}}

\newcommand{\commIS}[1]{\marginpar{%
\begin{color}{blue}
\vskip-\baselineskip 
\raggedright\footnotesize
\itshape\hrule \smallskip IS: #1\par\smallskip\hrule\end{color}}}

\newcommand{\commIIS}[1]{\marginpar{%
\begin{color}{dgreen} 
\vskip-\baselineskip 
\raggedright\footnotesize
\itshape\hrule \smallskip IS: #1\par\smallskip\hrule\end{color}}}

\newcommand{\commIV}[1]{\marginpar{%
\begin{color}{magenta}
\vskip-\baselineskip 
\raggedright\footnotesize
\itshape\hrule \smallskip IV.: #1\par\smallskip\hrule\end{color}}}




\hyphenation{re-pub-lished}
\hyphenation{ne-ce-ssa-ry}

\parskip 4pt plus 2pt minus 2pt

\def\bfdefault{b}
\overfullrule=5pt

\def \F{{\mathbb F}}
\def \K{{\mathbb K}}
\def \L{{\mathbb L}}
\def \N{{\mathbb N}}
\def \Z{{\mathbb Z}}
\def \Q{{\mathbb Q}}
\def \R{{\mathbb R}}
\def \C{{\mathbb C}}
\def\Fp{\F_p}
\def \fp{\Fp^*}

\title[Markoff Triples]{On the Structure of Graphs of Markoff Triples}

\author[S. Konyagin]{Sergei V.~Konyagin}
\address{Steklov Mathematical Institute,
8, Gubkin Street, Moscow, 119991, Russia} 
\email{konyagin@mi.ras.ru}

\author[S.  Makarychev]{Sergey V. Makarychev}
\address{Skolkovo Institute of Science and Technology,
Skolkovo Innovation Center, Building 3,
Moscow, 143026,
Russia}
\email{svmakarychev@yandex.ru}

\author[I.   Shparlinski]{Igor E. Shparlinski}
\address{Department of Pure Mathematics, University of New South Wales\\
2052 NSW, Australia.}
\email{igor.shparlinski@unsw.edu.au}

\author[I.   Vyugin]{Ilya V. Vyugin}
\address{Institute for Information Transmission Problems RAS\\
19, Bolshoy Karetny per., Moscow, 127051, Russia\\
and\\
Department of Mathematics, Higher School of Economics\\
6, Usacheva Street, Moscow, 119048, Russia.}
\email{vyugin@gmail.com}

\begin{abstract}  We sharpen the bounds of J.~Bourgain, A.~Gamburd and P.~Sarnak~(2016)
on the possible number of nodes outside  the ``giant component'' and on the size of individual 
connected components in the suitably defined
functional  graph of Markoff triples modulo $p$. This is a step towards the conjecture that 
there are no such nodes at all.  
\end{abstract}

\keywords{Markoff triple, reduction modulo $p$, connected graph, giant component}
\subjclass[2010]{11D25, 11D79, 11T06}

\maketitle

\section{Introduction}

\subsection{Background and motivation}

We recall that the set $\cM$ of {\it Markoff triples\/} $(x, y, z) \in \N^3$ is the 
set of positive integer solutions to the Diophantine equation 
\begin{equation}
\label{eq:Markoff}
x^2 +  y^2 + z^2 = 3 x yz, \qquad (x, y, z)\in \Z^3.
\end{equation}
One easily verifies that the map 
$$\cR_1: (x, y, z) \mapsto (3 yz - x,  y, z)
$$ 
and similarly defined maps $\cR_2$, $\cR_3$ (which are all involutions),  send one Markoff triple to another.
It is also obvious that
 so do permutations  $ \Pi \in \rS_3$ of the components of $(x, y, z)$. 
 
By a classical result of Markoff~\cite{Mar1,Mar2}  one can get all integer solutions to~\eqref{eq:Markoff}
starting from the solution $(1,1,1)$ and then applying the above transformations. More formally, let $\Gamma$ be 
the group of transformations generated by $\cR_1, \cR_2, \cR_3$ and permutations $\Pi \in \rS_3$.
Then the orbit of $(1,1,1)$ under $\Gamma$ contains $\cM$.   
 Hence, if one defines a {\it functional graph\/} on Markoff triples, where, starting from the ``root''  $(1,1,1)$,  
 the edges
 $(x_1, y_1,z_1)  \to  (x_2, y_2,z_2)$ are governed by $(x_2, y_2,z_2) = \cT(x_1, y_1,z_1)$,  where 
\begin{equation}
\label{eq:Transf}
 \cT=\{\cR_1,\cR_2,\cR_3\}\cup \rS_3, 
 \end{equation}
 then this graph is connected.

 Baragar~\cite[Section~V.3]{Bar} and, more recently, 
 Bourgain, Gamburd and Sarnak~\cite{BGS1,BGS2} conjecture that this property is preserved modulo all 
sufficiently large primes and the set of non-zero solutions $\cM_p$
to~\eqref{eq:Markoff} considered modulo $p$ can be obtained from the set of Markoff triples  
$\cM$ reduced modulo $p$.  This conjecture  means that the functional graph $\cX_p$ associated 
with the transformation~\eqref{eq:Transf} remains connected.

Accordingly, if we define by $\cC_p\subseteq \cM_p$ the set of the triples in the largest connected component
of the above graph $\cX_p$,  then we can state:

\begin{conj}[Baragar~\cite{Bar}; Bourgain, Gamburd and Sarnak~\cite{BGS1,BGS2}] 
\label{conj:BGS} 
For every prime $p$  we have $\cC_p = \cM_p$.
\end{conj}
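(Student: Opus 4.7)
The plan is to combine the quantitative sharpening achieved in this paper with a fibre-by-fibre algebraic analysis of the conic bundle on the Markoff surface to eliminate every exceptional triple. Suppose $\cE_p := \cM_p \setminus \cC_p$ is nonempty. By the new bounds established here, $|\cE_p|$ is much smaller than $|\cM_p|$, so the problem reduces to showing that this sparse set is actually empty.

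First I would exploit the conic fibration. Fixing $z=c$, equation~\eqref{eq:Markoff} defines a conic $L_c$ in the $(x,y)$-plane, and the Vieta involutions $\cR_1, \cR_2$ preserve it. Their composition $\cR_1\cR_2$ acts on $L_c$ by rotation: it corresponds to translation by an element $\lambda_c$ in the one-dimensional torus attached to $L_c$, either the split torus $\F_p^\times$ or the nonsplit torus of order $p+1$ according as $9c^2 - 4$ is a square in $\F_p$. Set $H_c = \langle\lambda_c\rangle$. The $\langle\cR_1,\cR_2\rangle$-orbit of any triple lying on $L_c$ sweeps out a coset of $H_c$, so if $L_c \cap \cE_p \ne \emptyset$ then $H_c$ must be a proper subgroup of the torus.

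Applying the same argument to the fibrations by $x$ and by $y$ forces any exceptional triple $(a,b,c)\in \cE_p$ to lie simultaneously in cosets of three proper subgroups $H_a$, $H_b$, $H_c$. Composing Vieta involutions across the three fibrations then generates a rich system of polynomial identities that every $\Gamma$-orbit inside $\cE_p$ must satisfy. At this point the main new theorem of this paper on solutions of polynomial equations in cosets of multiplicative subgroups applies and bounds the number of simultaneous solutions by $p^{o(1)}(|H_a||H_b||H_c|)^{\gamma}$ for some $\gamma < 1$. Choosing the polynomial system so that the triple is pinned down uniquely then yields $\cE_p = \emptyset$ for every sufficiently large $p$; the remaining small primes are handled by direct computation, building on the finite verifications already available in~\cite{BGS1,BGS2}.

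The hard part will be the intermediate regime for the subgroup sizes. If the orders $|H_\bullet|$ are very small, the subgroup bound is strong and closes the argument, and if they are close to $p$ the single-fibre orbit of $\langle\cR_1,\cR_2\rangle$ already covers all of $L_c$, so connectedness along that fibre is automatic. But in the mid-range $p^{\varepsilon} \le |H_c| \le p^{1-\varepsilon}$ neither mechanism alone suffices, and the polynomial-in-subgroups bound must be iterated against a companion expansion property of the Markoff action that mixes information between fibres. Establishing such an expansion uniformly in $p$ and in the fibre parameter is, in my view, the central obstacle; overcoming it would likely require a genuinely new spectral or sum--product input beyond the bound proved here, which is why the full Conjecture~\ref{conj:BGS} remains out of reach by the present methods.
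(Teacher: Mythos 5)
The item you were asked to prove is Conjecture~\ref{conj:BGS}, and it is precisely that: a conjecture. The paper contains no proof of it and explicitly does not claim one; its contribution is partial progress, namely Theorem~\ref{thm:except set} (the exceptional set $\cM_p\setminus\cC_p$ has size at most $\exp((\log p)^{1/2+o(1)})$) and Theorem~\ref{thm:lower} (each component of $\cX_p$ has size at least $c(\log p)^{7/9}$). There is therefore no ``paper's own proof'' to compare against, and any correct proof you submitted would be a major new theorem. To your credit, you do not actually claim one: your write-up is a research plan that you yourself flag as incomplete in the final paragraph.

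As a plan, the framework you describe is a fair summary of the Bourgain--Gamburd--Sarnak machinery that this paper also builds on: the conic fibres $L_c$, the rotation element $\lambda_c$ living in the split or nonsplit torus according to whether $9c^2-4$ is a square, the coset $H_c=\langle\lambda_c\rangle$ swept out by the $\langle\cR_1,\cR_2\rangle$-orbit, and the idea of feeding the resulting coset intersections into a bound on solutions of polynomial equations in multiplicative subgroups --- Theorem~\ref{thm:MV} is exactly that kind of input, and it is how Theorem~\ref{thm:except set} is obtained. Two points, though. First, a small inaccuracy: from $L_c\cap(\cM_p\setminus\cC_p)\neq\emptyset$ it does not follow directly that $H_c$ is a proper subgroup; a full orbit on a fibre makes that fibre connected, but linking the fibre to the giant component requires a further chaining and size-comparison argument. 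Second, and decisively, the step ``choosing the polynomial system so that the triple is pinned down uniquely then yields $\cE_p=\emptyset$'' is asserted, not carried out, and it cannot be carried out with the tools at hand: Theorem~\ref{thm:MV} gives a power saving in $t=\#\cG$, which is what shrinks the exceptional set, but a power saving never pins a solution uniquely, and in the intermediate range $p^{\varepsilon}\le t\le p^{1-\varepsilon}$ nothing available upgrades ``sparse'' to ``empty.'' You correctly identify this as the central obstacle and correctly say the conjecture remains out of reach; that honest diagnosis is the most accurate part of the proposal, and it is consistent with the statement appearing in the paper as a conjecture rather than a theorem.
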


Bourgain, Gamburd and Sarnak~\cite{BGS1,BGS2} have obtained several major results towards 
Conjecture~\ref{conj:BGS}, see also~\cite{CGMP,dCIL,dCIM,GMR}. 
For example, by~\cite[Theorem~1]{BGS1} we have 
\begin{equation}
\label{eq:except set}
\#\(\cM_p \setminus \cC_p\) = p ^{o(1)}, \qquad \text{as} \ p \to \infty, 
\end{equation}
and also by~\cite[Theorem~2]{BGS1} we know that Conjecture~\ref{conj:BGS} holds for 
all but maybe at most $X^{o(1)}$ primes $p \le X$ as $X\to \infty$. 

Here, in Theorem~\ref{thm:except set} below, 
 we obtain a more precise form of the bound~\eqref{eq:except set}.

Furthermore, Bourgain, Gamburd and Sarnak~\cite{BGS1,BGS2} have also proved that the size of any connected component
of the graphs $\cX_p$ is at least $c (\log p)^{1/3}$ 
for some absolute constant $c > 0$. 
 This bound is based on proving that any component 
contains a path of length at least $c (\log p)^{1/3}$. Here we use an additional argument and 
 show that a positive proportion of nodes along this path have ``secondary'' paths attached to 
 them which are also sufficiently long.  Finally, we show that ``many'' of the elements 
 of these ``secondary'' paths, have "tertiary"  paths that are long as well. 
 This allows us to  improve the exponent $1/3$ to $7/9$, see Theorem~\ref{thm:lower}.  

\subsection{New results}

First we improve the bound~\eqref{eq:except set}.

\begin{theorem} 
	\label{thm:except set}
	We have, 
	$$\#\(\cM_p \setminus \cC_p\) \le \exp\((\log p)^{2/3+o(1)}\) , \qquad \text{as} \ p \to \infty.
	$$
\end{theorem}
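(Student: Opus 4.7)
The plan is to follow the strategy of Bourgain, Gamburd and Sarnak~\cite{BGS1} for the bound~\eqref{eq:except set}, with Theorem~\ref{thm:MV} inserted as the new quantitative input in the crucial subgroup-counting step. I first recall the ``cage'' structure: for a triple $(x,y,z) \in \cM_p$, each composition $R_i = \cR_j \circ \cR_k$ of two of the involutions stabilises one coordinate and acts linearly on the affine conic obtained from~\eqref{eq:Markoff} by fixing that coordinate. For instance, with $z$ fixed, $R_z$ acts on the conic $X^2 + Y^2 + z^2 = 3z XY$ by an element whose eigenvalues $\alpha_z, \alpha_z^{-1} \in \overline{\F}_p^*$ satisfy $\alpha_z + \alpha_z^{-1} = 9z^2 - 2$, and the orbit of $(x,y,z)$ under $\langle R_z\rangle$ has cardinality equal to $\ord(\alpha_z)$; analogous statements hold for $R_x, R_y$. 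Hence a triple lying in a connected component of size less than some parameter $N$ forces $\alpha_x, \alpha_y, \alpha_z$ to lie in the multiplicative subgroup $\cG \subseteq \overline{\F}_p^*$ generated by elements of order at most $N$, so $\#\cG \le T$ with $T = \exp(O(N))$.

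Next I would reduce the counting to an application of Theorem~\ref{thm:MV}. Writing $x,y,z$ in terms of $\alpha_x, \alpha_y, \alpha_z$ via the trace relations above (each $z$ arises from $O(1)$ values of $\alpha_z$), the Markoff equation~\eqref{eq:Markoff} becomes an algebraic relation between pairs of subgroup elements, and symmetrising over the choice of stabilised coordinate and the discrete data of the cage produces a finite family $P_1, \ldots, P_h \in \overline{\F}_p[X,Y]$ of polynomials of bounded degree (depending only on the degrees of the trace parametrisations) such that every triple in $\cM_p \setminus \cC_p$ contributes a solution of some $P_k(u,v) = 0$ with $(u,v) \in \cG^2$. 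The total number of such triples is therefore controlled, up to a factor $O(1)$ coming from the $R$-to-coordinate correspondence, by
$$
\sum_{k=1}^h \#\{(u,v) \in \cG^2 : P_k(u,v) = 0\}.
$$
Before invoking Theorem~\ref{thm:MV}, I need to verify the hypotheses: irreducibility of a suitable base polynomial $P$ derived from the Markoff equation (a standard check using that the Markoff cubic is smooth), that $P^{\sharp}$ has at least two monomials (clear from the explicit shape of the relation), and, most importantly, that the $P_k$ in our list are $\cG$-independent. Non-$\cG$-independent pairs correspond to coincidences between cage structures at different base points, which can be grouped and bounded separately by elementary means.

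Once the bound $O(h^{2/3} t^{2/3})$ of Theorem~\ref{thm:MV} is in hand (with $t = \#\cG$), I would optimise the parameters. Taking $N = (\log p)^{1/2 + o(1)}$ gives $t \le T = \exp(N^{1+o(1)}) = \exp((\log p)^{1/2 + o(1)})$, which is well below $p^{3/4}$, satisfying the upper constraint of Theorem~\ref{thm:MV}. The number $h$ of polynomials in the family is polynomial in $N$ and therefore satisfies $h^2 \le t$ comfortably. Combining, the total count is at most $h^{2/3} t^{2/3} = \exp((\log p)^{1/2 + o(1)})$, matching the claimed bound.

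The main obstacle is the $\cG$-independence verification together with the passage from ``small connected component'' to ``small multiplicative order of the rotation eigenvalues''. The latter requires careful bookkeeping: the component can be small without any single $\langle R_i\rangle$-orbit being small, so one has to exploit the full BGS analysis of the action of the group generated by all three rotations on $\cM_p$ and extract a subgroup $\cG$ of the right size. This is where the improvement from $p^{o(1)}$ to $\exp((\log p)^{1/2+o(1)})$ is earned: the sharper exponent $2/3$ of Theorem~\ref{thm:MV} (compared with weaker $1+o(1)$-type bounds used previously) together with the admissibility of multiple polynomials $P_k$ allows one to take $t = \#\cG$ substantially smaller, so that the corresponding $N$ can be pushed down to $(\log p)^{1/2+o(1)}$.
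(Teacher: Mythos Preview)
Your proposal has a genuine gap in the choice of the parameter $N$. You treat $N$ as free and set $N = (\log p)^{1/2+o(1)}$, but $N$ must be an a priori upper bound on the sizes of the non-giant components, and the only such bound available at the outset is $N = p^{o(1)}$ from~\eqref{eq:except set}. With that value your subgroup $\cG$ (generated by all elements of $\F_{p^2}^*$ of order at most $N$) can have order $t$ anywhere up to $\min\{\exp((1+o(1))N),\,p^2-1\}$, which need not satisfy the hypothesis $t \le \tfrac12 p^{3/4} h^{-1/4}$ of Theorem~\ref{thm:MV}, so the theorem does not apply; and even if it did, the resulting bound $h^{2/3}t^{2/3}$ would be useless. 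Put differently, your scheme proves only the implication ``if every non-giant component has size $<N$ then $\#(\cM_p\setminus\cC_p)\ll\exp(O(N))$'', and taking $N=(\log p)^{1/2+o(1)}$ in this implication is circular. A secondary issue is that Theorem~\ref{thm:MV} requires the $P_k$ to be $\cG$-translates of a \emph{single} base polynomial $P$ as in~\eqref{eq:Pk}, not an arbitrary bounded-degree family, and your reduction from the three-variable Markoff relation to such a family is not explained.

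The paper runs the argument in the opposite direction, as a self-improving contradiction. Assuming $\#(\cM_p\setminus\cC_p)>M_0=\exp((\log p)^{1/2+\ve})$, one finds a single orbit inside the exceptional set whose set of first-coordinate projections has size $M\gg M_0^{1/6}$ (and $M=p^{o(1)}$ by~\eqref{eq:except set}). A pigeonhole over divisors of $p^2-1$, controlled by the small-divisor bound of Lemma~\ref{lem:tauz}, then produces a \emph{single} order $t_0\mid p^2-1$ with $M^{2/3+\ve/3}<t_0\le 2M$ shared by $\gg M^{1/2+\ve/3}$ elements $x$ of that projection set. For these $x$ the secondary orbits $\cZ(x)$ are all parametrised by the same cyclic subgroup $\cH$ of order $t_0$, and the intersections $\cZ(x_i)\cap\cZ(x_j)$ are governed by polynomials that are genuine $\cH$-translates of one fixed $P$; this is exactly the setting of Theorem~\ref{thm:MV}. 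With $h\asymp t_0^{1/2}$ such $x_i$, the theorem bounds the total overlap by $O(t_0)$, forcing $\#\bigcup_i\cZ(x_i)\gg ht_0\gg t_0^{3/2}>M$, a contradiction. Thus Theorem~\ref{thm:MV} is used not to count the exceptional triples directly but to certify near-disjointness of many secondary orbits; the divisor-counting Lemma~\ref{lem:tauz}, which you do not invoke, is what pins down the exponent $1/2$.
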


We also obtain the following improvement of a lower bound from~\cite{BGS1,BGS2}
on the size of individual components of $\cX_p$, 

\begin{theorem} 
\label{thm:lower}
The size of any connected component of $\cX_p$  is at least $c(\log p)^{7/9}$,
where $c>0$ is an absolute constant.
\end{theorem}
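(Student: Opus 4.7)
The strategy extends the Bourgain--Gamburd--Sarnak approach of~\cite{BGS1,BGS2}, whose bound $(\log p)^{1/3}$ is obtained by building a single long path inside each component using iterations of a ``rotation'' map. The plan is to attach two further levels of branches to this path and to check that, with a suitable choice of parameters, the total number of nodes produced is $\gg (\log p)^{7/9}$.

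\textbf{Step 1 (main path).} Fix any triple $T \in \cM_p$ and write $\cC$ for its connected component in $\cX_p$. For $\{i,j,k\}=\{1,2,3\}$ let $\rot_i = \cR_j\cR_k$ be the rotation preserving the $i$-th coordinate of a Markoff triple; this rotation preserves the Markoff conic obtained by fixing that coordinate. Following~\cite{BGS1,BGS2}, the order of $\rot_i$ acting on the relevant conic is a divisor of an integer of the form $p\pm 1$ (up to a quadratic extension), and standard multiplicative number theory (together with the choice of a coordinate whose fibre is non-degenerate) produces a $\rot_i$-orbit of $T$ of length at least $L_1 \gg (\log p)^{1/3}$. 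Call the resulting path $\cP_1 \subseteq \cC$.

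\textbf{Step 2 (secondary paths).} At each $T'\in \cP_1$ consider the alternative rotation $\rot_{i'}$, $i'\neq i$, fixing a different coordinate of $T'$. Use a pigeonhole/counting argument to show that for a positive proportion of the nodes $T' \in \cP_1$ the $\rot_{i'}$-orbit has length at least $L_2 \gg (\log p)^{1/3}$, yielding a path $\cP_2(T')$ which (except at controllably few nodes) is disjoint from $\cP_1$ and from the other secondary paths. The ``bad'' nodes where the orbit is too short correspond to atypical conic parameters and must all lie in a small algebraic subset of $\cM_p$; hence they cannot exhaust a positive fraction of $\cP_1$.

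\textbf{Step 3 (tertiary paths).} Iterate once more. At every $T''$ on each secondary path, invoke the remaining rotation $\rot_{i''}$ and show that for ``many'' such $T''$ its orbit has length at least $L_3 \gg (\log p)^{1/9}$, producing tertiary paths $\cP_3(T'')$ largely disjoint from the already-constructed structure. The weaker exponent $1/9$ at this level is dictated by the simultaneous disjointness constraints across all the tertiary paths: the pool of admissible orbits is now much smaller than at the secondary level, and demanding near-disjointness forces the length down. Combining the three levels,
$$
|\cC| \;\gg\; L_1 L_2 L_3 \;\gg\; (\log p)^{1/3+1/3+1/9} \;=\; (\log p)^{7/9}.
$$

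\textbf{Main obstacle.} The delicate part is Steps 2 and 3: simultaneously verifying that a positive fraction of nodes at each level admit an alternative rotation with long orbit, \emph{and} that the resulting paths overlap only negligibly with each other and with the paths built at earlier levels. Controlling the overlap reduces to bounding the number of Markoff triples that can share a short rotation orbit, which uses the algebraic structure of the rotation action on the Markoff surface modulo $p$ (the same kind of input underlying the $(\log p)^{1/3}$ bound of~\cite{BGS1,BGS2}). The balance between ``long orbit at a typical node'' and ``pairwise near-disjointness of paths'' is precisely what fixes the exponents $1/3,\,1/3,\,1/9$ at the three levels and yields the final $7/9$.
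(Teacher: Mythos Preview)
Your high-level plan—main path, secondary paths, tertiary paths—matches the paper's, but the exponent decomposition $1/3+1/3+1/9$ that you propose does not work, and this reflects a genuine gap rather than a cosmetic difference. You claim to attach secondary paths of length $L_2\gg(\log p)^{1/3}$ at a \emph{positive proportion} of the $L_1\gg(\log p)^{1/3}$ main-path nodes, and then tertiary paths of length $L_3\gg(\log p)^{1/9}$, all essentially disjoint. For the product $L_1L_2L_3$ to count distinct triples, the coordinates fixed by the tertiary rotations must be essentially distinct across all the secondary paths. Those coordinates are values of binary recurrence sequences indexed by the secondary anchors $y_j$, and the only available control on their overlaps is the Corvaja--Zannier estimate of Lemma~\ref{lem:small_intersect}: two such sequences meet in $O\bigl((t(y_j)t(y_{j'}))^{1/3}\bigr)\approx(\log p)^{2/9}$ places. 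With about $(\log p)^{1/3}$ secondary paths in play, the pairwise overlaps total about $(\log p)^{2/3+2/9}=(\log p)^{8/9}$, which swamps the $(\log p)^{2/3}$ supply of tertiary anchors. Your disjointness claim in Step~3 therefore cannot be established, and shrinking $L_3$ does nothing to repair it: the bottleneck is the number of secondary branches, not the tertiary length. Your remark that the bad nodes ``lie in a small algebraic subset'' is not the right mechanism and would not produce the quantitative intersection bound required.

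The paper places the $1/9$ elsewhere. It keeps only $s\approx(\log p)^{1/9}$ secondary anchors $y_1,\ldots,y_s$ (precisely the threshold at which the overlap from Lemma~\ref{lem:small_intersect} is dominated), uses the \emph{full} secondary orbit of each $y_i$ to produce $\gg t(y_i)\ge(\log p)^{1/3}$ distinct tertiary anchors $z$, and then applies Lemma~\ref{lem:threeorders} to the triples $(y_i,z_k,z_{k+1})$ to get $\sum_{z} t(z)\gg t(y_i)^{1/2}(\log p)^{1/2}\ge(\log p)^{2/3}$ for each $y_i$. Thus the correct breakdown is $(\log p)^{1/9}\times(\log p)^{1/3}\times(\log p)^{1/3}$, with the small factor coming from the number of secondary branches retained, not from the tertiary path length. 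The paper also needs a case split (on whether some $x_0$ has $t(x_0)$ in the window $[(\log p)^{0.15},(\log p)^{1/3}]$) to guarantee that $s$ anchors with suitably large $t(y_i)$ actually exist; your sketch has no analogue of this. Finally, the initial $(\log p)^{1/3}$ is not ``standard multiplicative number theory'' but the effective Nullstellensatz bound $t(x)t(y)t(z)\gg\log p$ of Lemma~\ref{lem:threeorders}, which is used repeatedly at every level.
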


\section{Preliminaries}

\subsection{Solutions to  polynomial equations in subgroups 
of finite fields}

We need the following result due to Corvaja and Zannier~\cite[Corollary~2]{CoZa}, which 
generalises a series of previous estimates of a similar type, 
see~\cite{CoZa, GaVo, HBK, ShkVyu} and references therein. 

\begin{lemma}\label{lem:CZ}
Suppose that a bivariate irreducible polynomial
$$
P(X,Y) \in \ovFp[X,Y]
$$ 
is irreducible, of  degrees with respect to $X$ and $Y$
$$
\deg_X P = m \mand \deg_Y P = n,
$$
respectively,  and which is not of   the form 
$$
  \alpha X^m Y^n+\beta \qquad \text{or} \qquad \alpha X^m +\beta Y^n.
$$
with some $\alpha, \beta \in  \ovFp$. 
There exists a constant $c_0(m,n)$, depending only on $m$ and $n$, 
such that for any  multiplicative  subgroups  $\cG_1,\cG_2 \subseteq \ovFp$ 
of orders $t_1 = \# \cG_1$ and $t_2 = \# \cG_2$ we have
$$
\#\left\{ (u,v)\in \cG_1\times \cG_2 \,:\,P(u,v)=0\right\}  \le c_0(m,n) \max\{(t_1t_2)/p ,   (t_1t_2)^{1/3}\}. 
$$
\end{lemma}

\subsection{Multiplicative orders and  binary recurrences} 
For $x \in \F_p^*$ we define 
\begin{equation}
\label{eq:tx}
t(x) = \ord \xi
\end{equation}
as the order of $\xi \in \F_{p^2}^*$
which satisfies the equation  $3x=\xi+\xi^{-1}$ (it is easy to see that this is correctly defined and 
does not depend on the particular choice of $\xi$).  

Throughout the paper, as usual, we use the expressions $F \ll G$, $G \gg F $ and $F=O(G)$ to
mean that $|F|\leq \cG$ for some constant $c>0$. 

We also need the following result which follows immediately from Lemma~\ref{lem:CZ} and the explicit form of 
solutions to binary  recurrence equations.  

\begin{lem}\label{lem:small_intersect}
For  two distinct elements $x_1,x_2\in \ovFp$ we consider the binary recurrence sequences
$$
 u_{i,n+2} = 3x_iu_{i,n+1} - u_{i,n}, \qquad n = 1, 2, \ldots,
$$
 with nonzero initial values, $(u_{i,1}, u_{i,2}) \in \ovFp$, $i=1,2$.
 Then 
\begin{align*}
\# \(\{u_{1,1},\dots,u_{1,t(x_1)}\}\cap\{u_{2,1},\dots,u_{2,t(x_2)}\} \)&\\
\ll \frac{t(x_1) t(x_2)}{p} &+ \(t(x_1) t(x_2)\)^{1/3}.
\end{align*}
\end{lem}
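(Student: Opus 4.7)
The plan is to use the explicit formula for binary recurrence solutions to translate the intersection count into a count of zeros of a bivariate polynomial equation over a product of two multiplicative subgroups of $\ovFp^*$, after which the cited result of Corvaja and Zannier will supply the desired bound.

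First, for each $i = 1, 2$, I would fix a root $\xi_i \in \ovFp^*$ of the characteristic polynomial $\lambda^2 - 3x_i \lambda + 1 = 0$, so that $\xi_i + \xi_i^{-1} = 3x_i$ and, by the definition of $t(x_i)$ in~\eqref{eq:tx}, $\ord \xi_i = t(x_i)$. Let $\cG_i = \langle \xi_i \rangle$, a cyclic multiplicative subgroup of order $t(x_i)$. When $\xi_i^2 \ne 1$, the general solution of the recurrence takes the shape $u_{i,n} = A_i \xi_i^n + B_i \xi_i^{-n}$ for some $A_i, B_i \in \ovFp$, not both zero thanks to the nonzero initial value $u_{i,1}$; the degenerate case $\xi_i^2 = 1$ forces $t(x_i) \le 2$, so its $O(1)$ contribution can be absorbed into the implicit constant.

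Next, every element of $\{u_{1,1},\dots,u_{1,t(x_1)}\} \cap \{u_{2,1},\dots,u_{2,t(x_2)}\}$ arises from at least one pair $(m, n)$ with $u_{1,m} = u_{2,n}$, so the cardinality of the intersection is at most the number of such pairs. Substituting $U = \xi_1^m \in \cG_1$ and $V = \xi_2^n \in \cG_2$ and clearing denominators in $A_1 U + B_1 U^{-1} = A_2 V + B_2 V^{-1}$ turns each such pair into a solution of the polynomial equation
\[
P(U, V) := A_1 U^2 V - A_2 U V^2 + B_1 V - B_2 U = 0, \qquad (U, V) \in \cG_1 \times \cG_2.
\]
Hence the intersection size is bounded above by the number of zeros of $P$ in the product $\cG_1 \times \cG_2$.

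Finally, I would invoke~\cite{CoZa}*{Theorem~2} to bound this last quantity by a constant multiple of $t(x_1) t(x_2)/p + (t(x_1) t(x_2))^{1/3}$, with implicit constant depending only on $\deg P \le 3$. The hard part will be verifying the non-degeneracy hypothesis of that theorem: that $P$ contains no factor of the form $\alpha U^a V^b - \beta$ (nor $\alpha U^a - \beta V^b$) whose zero locus meets $\cG_1 \times \cG_2$ in more than the claimed bound. A short case analysis on which of the coefficients $A_i, B_i$ vanish reduces this to ruling out certain multiplicative relations between $\xi_1$ and $\xi_2$; the only relations compatible with such a factorisation would force $\xi_1 = \xi_2^{\pm 1}$, i.e. $x_1 = x_2$, contradicting the distinctness hypothesis.
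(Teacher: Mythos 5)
Your strategy is precisely the one the paper intends: the paper offers no argument for this lemma beyond the remark that it ``follows immediately from the explicit form of solutions to binary recurrence equations and a result~\cite[Theorem~2]{CoZa}'', and your reduction of the intersection to the zero set of $P(U,V)=A_1U^2V-A_2UV^2+B_1V-B_2U$ on $\cG_1\times\cG_2$ is exactly that reduction. You have also correctly isolated the only non-trivial point, namely the Corvaja--Zannier hypothesis that no irreducible component of $\{P=0\}$ is a translate of a subtorus.

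However, your proposed resolution of that point does not go through, and this is a genuine gap. Whether $P$ acquires a binomial factor is governed by the coefficients $A_i,B_i$, i.e.\ by the initial values, and not by a multiplicative relation between $\xi_1$ and $\xi_2$: one checks directly that
\[
A_1U^2V-A_2UV^2+B_1V-B_2U=(\alpha UV-\beta)(cU-dV)
\]
is possible exactly when $A_1B_1=A_2B_2$ (match $\alpha c=A_1$, $\alpha d=A_2$, $\beta c=B_2$, $\beta d=B_1$), a condition perfectly compatible with $x_1\ne x_2$. When it holds, the torsion coset $UV=\beta/\alpha=B_2/A_1$ lies on the curve, and every point of it in $\cG_1\times\cG_2$ is a genuine coincidence $u_{1,m}=u_{2,n}$; if $\langle\xi_1\rangle$ and $\langle\xi_2\rangle$ share a large subgroup this contributes up to $\gcd(t(x_1),t(x_2))$ points, which can vastly exceed $(t(x_1)t(x_2))^{1/3}$. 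Concretely, take $\xi_2=\xi_1^2$ of the same odd order $t$ (so $x_1\ne x_2$) and $(A_2,B_2)=(B_1,A_1)$: then the two value sets $\{A_iw+B_iw^{-1}:~w\in\langle\xi_1\rangle\}$ coincide entirely and the intersection has size $\gg t$. So the step ``a short case analysis \dots would force $\xi_1=\xi_2^{\pm1}$'' fails as written. To repair it one must either add a hypothesis excluding $A_1B_1=A_2B_2$, or use the extra structure available in the paper's applications, where $A_iB_i=r(x_i)$ with $r$ as in the proof of Theorem~\ref{thm:except set}, so that $A_1B_1=A_2B_2$ forces $\xi_1\in\{\pm\xi_2^{\pm1}\}$, i.e.\ $x_1=\pm x_2$ --- note the $\pm$, which is why the paper there assumes $x_1\ne\pm x_2$ rather than merely $x_1\ne x_2$. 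With arbitrary nonzero initial values and only $x_1\ne x_2$ assumed, the statement itself is in doubt at exactly the point where your argument is incomplete.
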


The following statement can also be derived from~\cite[Equation~(68)]{BGS2}; however 
we give an independent proof. 

\begin{lem}\label{lem:threeorders} For any nonzero triple $(x,y,z)\in \cM_p$, 
 we have  
$$
t(x)t(y)t(z) \gg \log p.
$$ 
\end{lem}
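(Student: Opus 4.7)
The plan is to exploit the rotation structure of the Markoff moves, combined with an exponential growth bound in the integer Markoff tree.

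Firstly, I observe that $\cR_2 \cR_1$ fixes $z$ and acts linearly on $(x,y)$, sending it to $(3yz - x,\, 9yz^2 - 3xz - y)$. The corresponding $2 \times 2$ matrix has determinant $1$ and trace $9z^2 - 2 = \zeta^2 + \zeta^{-2}$, where $\zeta \in \F_{p^2}^*$ is defined by $3z = \zeta + \zeta^{-1}$. Thus its eigenvalues are $\zeta^{\pm 2}$ and it has multiplicative order $t(z)/\gcd(t(z),2)$, which lies between $t(z)/2$ and $t(z)$. Analogously, $\cR_3 \cR_2$ and $\cR_1 \cR_3$ fix $x$ and $y$ and have orders of sizes comparable to $t(x)$ and $t(y)$ respectively.

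Secondly, I lift $(x,y,z) \in \cM_p$ along a descending branch of the integer Markoff tree, using iterated Vieta jumps to reach a base triple such as $(1,1,1)$. The integer coordinates along this branch grow exponentially in the number of moves, so in order for all these integer triples to have pairwise distinct reductions modulo $p$ the branch length must be $\gg \log p$; equivalently, the path they trace in $\cX_p$ has length $\gg \log p$ before any collision can occur.

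Thirdly, I decompose this path in $\cX_p$ into blocks of iterated single rotations. Each block has length at most the order of its rotation, hence $\ll t(x)$, $\ll t(y)$, or $\ll t(z)$. The branching structure of the Markoff tree limits the number of distinct blocks (turns between rotation types) along the lift, and a combinatorial argument using the fact that any alternating product of the three rotations is pinned down by its exponents modulo the respective orders should give a total bound of $O(t(x) t(y) t(z))$, which combined with the lower bound $\gg \log p$ yields the claim.

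The main obstacle will be the third step: passing from a per-block bound to a multiplicative product $O(t(x) t(y) t(z))$ rather than an additive sum $O(t(x)+t(y)+t(z))$ times a number of turns. I expect this to require a careful orbit analysis showing that any reduced word in the three rotations admits an efficient canonical form whose length is bounded by the product of the orders, or alternatively, using that the Markoff tree is trivalent so that each interior node of the lift contributes at most a bounded multiplicative cost.
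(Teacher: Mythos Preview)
Your proposal has a genuine gap, and it occurs already at the second step. You write that you ``lift $(x,y,z)\in\cM_p$ along a descending branch of the integer Markoff tree'' and descend to $(1,1,1)$. But the existence of such a lift is precisely Conjecture~\ref{conj:BGS}, which is open; the lemma must hold for \emph{every} nonzero $(x,y,z)\in\cM_p$, in particular for triples lying in small components of $\cX_p$ that may have no integer preimage at all. Indeed, in the paper the lemma is invoked exactly in the analysis of such small components (Theorem~\ref{thm:lower}), so assuming a lift would be circular.

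Your third step is also problematic even granting a lift. Along any path in the Markoff tree the coordinates change, and with them the orders $t(\cdot)$; the rotation $\cR_2\cR_1$ fixes the \emph{current} third coordinate, not the original $z$. So after the first block the relevant orders are no longer $t(x),t(y),t(z)$, and there is no mechanism forcing the total path length to be controlled by the product of the orders of the \emph{initial} triple. The ``canonical form'' you hope for in the free product of three cyclic groups of orders $t(x),t(y),t(z)$ does not exist here, because the three rotations do not commute and do not generate a group whose size is that product.

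The paper's argument avoids the graph entirely. Over $\C$, the Markoff equation has no nonzero solution with $3x=\xi+\xi^{-1}$, $3y=\zeta+\zeta^{-1}$, $3z=\eta+\eta^{-1}$ for roots of unity $\xi,\zeta,\eta$ (an elementary inequality). Hence the system $\Phi_r(U)=\Phi_s(V)=\Phi_t(W)=U^2V^2W^2F(U,V,W)=0$ is inconsistent over $\C$, and the effective Nullstellensatz of D'Andrea--Krick--Sombra produces an integer $A$ in the ideal with $\log A\ll rst$. If $(x,y,z)\in\cM_p$ has $t(x)=r$, $t(y)=s$, $t(z)=t$, reducing the Nullstellensatz identity modulo $p$ forces $p\mid A$, whence $\log p\le\log A\ll rst$. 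This works uniformly for every triple in $\cM_p$, with no lifting and no graph combinatorics.
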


\begin{proof}  As in~\cite{BGS1,BGS2} we note that the  inequality between the arithmetic and 
geometric means implies that the equation~\eqref{eq:Markoff}, considered over $\C$ has no 
non-zero solution $(x,y, z)$ where 
$$
3x = \xi +\xi^{-1}, \qquad 3y = \zeta+\zeta^{-1}, \qquad 3z =\eta +\eta^{-1}
$$
with the roots of unity $\xi$, $\zeta$, $\eta$ (or more generally with any $|\xi|=|\zeta|=|\eta|=1$). 

Thus if we denote by $\Phi_k$ the $k$th cyclotomic polynomial, and also define
\begin{align*}
F(U,V,W) = (U +U^{-1})^2& +(V+V^{-1})^2 +(W +W^{-1})^2 \\
& \quad -(U +U^{-1}) (V+V^{-1}) (W +W^{-1})
\end{align*}
then for any  positive integers $r,s,t$, the system of polynomials equations 
$$
U^2V^2W^2F(U,V,W) = \Phi_r(U) = \Phi_s(V) = \Phi_t(W) = 0
$$
has no solutions (unless $r=s = t =4$). 
Using the effective Hilbert's Nullstellensatz in the form given by 
D'Andrea,   Krick and  Sombra~\cite[Theorem~1]{DKS} we see that  for some 
polynomials $g_i(U,V,W) \in \Z[U,V,W]$, $i=1,\ldots, 4$ we have 
\begin{align*}
U^2V^2W^2F(U,V,W)&g_1(U,V,W)+   \Phi_r(U)g_2(U,V,W) \\
& + \Phi_s(V)g_3(U,V,W)+ \Phi_t(W)g_4(U,V,W) = A
\end{align*}
with some positive   integer $A$ with $\log A   \ll rst$. This immediately implies the result. 
\end{proof}

\subsection{Number of small divisors of integers}

For a real $z$ and an integer $n$ we use $\tau_z(n)$ to denote the number of 
integer positive divisors $d \mid n$ with $d \le z$. We present a bound on  $\tau_z(n)$ 
for small values of $z$ (which we put in a slightly more general form than we need for our applications). 

\begin{lem}\label{lem:tauz} For any fixed real positive $\gamma< 1$, if 
	$z \ge  \exp\((\log n)^{\gamma + o(1)}\)$ then 
	$$
	\tau_z(n) \le z^{1-\gamma + o(1)}
	$$
	as $n \to\infty$. 
\end{lem}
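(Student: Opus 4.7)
The plan is to use Rankin's trick. For any $\alpha > 0$, since $d \le z$ implies $(z/d)^\alpha \ge 1$, we have
$$
\tau_z(n) \le z^\alpha \sum_{d\mid n} d^{-\alpha} = z^\alpha\sigma_{-\alpha}(n),
$$
and multiplicativity of $\sigma_{-\alpha}$ yields the Euler-product bound $\sigma_{-\alpha}(n) \le \prod_{p\mid n}(1-p^{-\alpha})^{-1}$. Taking logarithms, the inequality $-\log(1-x) \le 2x$ valid for $x \le 1/2$ handles primes $p \ge 2^{1/\alpha}$, while the finitely many smaller primes contribute only $O_\alpha(1)$ provided $\alpha$ stays bounded away from $0$. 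This reduces the task to bounding
$$
\log\sigma_{-\alpha}(n) \le O_\alpha(1) + 2\sum_{p\mid n} p^{-\alpha}.
$$

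The heart of the proof is the estimate, uniform for $\alpha$ in any compact subset of $(0,1)$,
$$
\sum_{p\mid n} p^{-\alpha} \le (1+o(1))\, \frac{(\log n)^{1-\alpha}}{(1-\alpha)\log\log n}, \qquad n\to\infty.
$$
To prove it, I would observe that $p^{-\alpha}/\log p$ is strictly decreasing in $p$, so a Lagrangian argument shows that under the budget $\sum_{p\mid n} v_p(n)\log p = \log n$ the sum is maximised when $n$ is squarefree and supported on the smallest primes; the extremal configuration is a primorial $n = \prod_{p\le y} p$ with $\vartheta(y) \le \log n$. The prime number theorem then gives $y \sim \log n$, and Abel summation against $\pi(t)\sim t/\log t$ delivers the displayed bound.

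Finally, I would specialise to $\alpha = 1-\gamma$, which lies in a fixed compact subset of $(0,1)$ by hypothesis on $\gamma$, to obtain
$$
\log\tau_z(n) \le (1-\gamma)\log z + C_\gamma \frac{(\log n)^\gamma}{\log\log n}.
$$
Under the hypothesis $z \ge \exp((\log n)^{\gamma+o(1)})$ one has $\log z \ge (\log n)^{\gamma+o(1)}$, so $(\log n)^\gamma/\log\log n = o(\log z)$ as $n\to\infty$. It follows that $\log\tau_z(n) \le (1-\gamma+o(1))\log z$, which is the claim. The principal technical obstacle is establishing the Mertens-type extremal estimate of the second paragraph; once it is in hand, the remainder is a routine optimisation in $\alpha$ together with careful handling of the $o(1)$ terms.
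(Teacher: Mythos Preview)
Your approach via Rankin's trick is genuinely different from the paper's. The paper bounds $\tau_z(n)$ by the smooth-number count $\psi(z,p_s)$, where $s=\omega(n)$ and $p_s$ is the $s$-th prime, and then invokes the classical asymptotic $\psi(x,(\log x)^\alpha)=x^{1-1/\alpha+o(1)}$ from~\cite{HT}. Your argument is more self-contained: it replaces the smooth-number input by the elementary extremal estimate $\sum_{p\mid n}p^{-\alpha}\ll (\log n)^{1-\alpha}/\log\log n$, which follows from the prime number theorem via the primorial argument you sketch. Both routes are standard; yours avoids citing the de~Bruijn--Canfield--Erd\H{o}s--Pomerance machinery at the cost of redoing a small piece of it by hand.

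There is one slip in your final paragraph. With $\alpha=1-\gamma$ fixed, the implication ``$\log z\ge(\log n)^{\gamma+o(1)}$, hence $(\log n)^\gamma/\log\log n=o(\log z)$'' is not valid in full generality: take $\log z=(\log n)^\gamma/\log\log n$, which satisfies the hypothesis (since $\log\log z=(\gamma+o(1))\log\log n$), yet $(\log n)^\gamma/\log\log n=\log z$ exactly. The fix is routine: for each fixed $\eta\in(0,\gamma)$ take $\alpha=1-\gamma+\eta$, so the error term becomes $O_{\gamma,\eta}\bigl((\log n)^{\gamma-\eta}/\log\log n\bigr)$. Since eventually $\log z\ge(\log n)^{\gamma-\eta/2}$, this error is now genuinely $o(\log z)$, giving $\tau_z(n)\le z^{1-\gamma+\eta+o(1)}$. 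Letting $\eta\downarrow 0$ yields the claim. (Your stated uniformity of the extremal estimate for $\alpha$ in compact subsets of $(0,1)$ is exactly what makes this work.)
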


\begin{proof} As usual, we say that a positive integer is   $y$-smooth
if it is composed of prime numbers  up to $y$. Then we denote by $\psi(x,y)$ 
the number of $y$-smooth positive integers that are up to $x$.
 Let $s$ be the number of all distinct prime divisors of $n$ and let $p_1, \ldots, p_s$ 
 be the first $s$ primes. We note that 
\begin{equation}
\label{eq:tau psi}
\tau_z(n) \le \psi(z, p_s).
\end{equation}
 
By the prime number theorem we have $n \ge p_1\ldots p_s =  \exp(s + o(s))$
and thus 
\begin{equation}
\label{eq:ps z}
p_s  \ll  s \log s \le (\log n)^{1 + o(1)} \le (\log z)^{1/\gamma + o(1)}.
\end{equation}
We now recall that for any fixed $\alpha>1$ we have
$$
\Psi(x,(\log x)^\alpha)=x^{1-1/\alpha+o(1)}
$$
as $x\to \infty$, see, for example,~\cite[Equation~(1.14)]{HT}.
Combining this with~\eqref{eq:tau psi} and~\eqref{eq:ps z} we 
conclude the proof. 
\end{proof}

\section{Proofs of main results}

\subsection{Proof of Theorem~\ref{thm:except set}}

\def\X{\mathbf X} Before giving technical details we first outline the sequence 
of steps
\begin{itemize}
\item  We consider the set $\cR=\cM_p \setminus \cC_p$  and show that   
if  it is large then there is a large set $\cL \subseteq \cR$
elements of the same order $t_0$ (which is also large).

\item  Each element of $\cL$ has an orbit of size $t_0$ which is also in $\cR$.

\item We estimate the size of  intersections of these orbits for distinct elements $x_1, x_2 \in \cL$.

\item We conclude that all intersections together are small and so to fit them all 
the size of $\cR$ must be even larger than we initially assumed.

\end{itemize}

We always assume that $p$ is large enough. Define the mapping
$$
\cT_0 \(x, y, z\) \mapsto \(x, z, 3xz-y\)
$$
where $\cT_0 = \Pi_{1,3,2}\circ \cR_2$ is the composition of the permutations
$$ 
\Pi_{1,3,2} = (x, y,z)  \mapsto  (x, z,y)
$$
and the involution 
$$
\cR_2: (x, y,z) \mapsto (x, 3xz - y,  z)
$$
 as in the above.  
 
Therefore the orbit $\Gamma(x, y, z)$ of $(x, y,z)$ under the 
above group of transformations $\Gamma$ contains, in 
particular the triples $(x, u_n, u_{n+1})$, $n = 1, 2, \ldots$, 
 where the sequence $u_n$ satisfies a binary linear recurrence relation
\begin{equation}
\label{eq:bin rec}
 u_{n+2} = 3xu_{n+1} - u_n, \qquad n = 1, 2, \ldots,
\end{equation}
 with the initial values, $u_1=y$, $u_2 = z$.
 This also means that  $\Gamma(x, y, z)$ contains all triples obtained
 by the permutations of the elements in  $(x, u_n, u_{n+1})$.
 
 Let $\xi, \xi^{-1} \in \F_{p^2}^*$ be the roots of the characteristic polynomial 
 $Z^2 -3xZ +1$ of  the recurrence relation~\eqref{eq:bin rec}. 
 In particular $3x=\xi+\xi^{-1}$. Then, it is easy to see that unless  $(x, y, z)=(0,0,0)$, 
 which we eliminate from the consideration, the sequence $u_n$ is 
 periodic with period $t(x)$ which   is the order of $\xi$ in $\F_{p^2}^*$
 as given by~\eqref{eq:tx}.

We now fix some $\ve>0$ and denote
$$M_0=\exp((\log p)^{2/3+\ve}),\quad M_1 = M_0^{1/6}/2
> \exp((\log p)^{2/3+\ve/2}).$$

Assume that the remaining set of nodes $\cR=\cM_p \setminus \cC_p$ 
is of size $\# \cR>M_0$. Note that if $(x,y,z)\in \cR$ then 
also  
$\Pi(x,y,z)\in \cR$ for every $\Pi \in \rS_3$. 
 Therefore,  there are more than  $M_0^{1/3}$ elements $x\in\F_p^*$ with
$(x,y,z)\in \cR$ for some $y, z \in \F_p$. 

Since there are obviously at most $T(T+1)/2$ elements $\xi \in \F_{p^2}^*$
of order at most $T$ 
we conclude that there is a 
triple  $(x^*,y^*,z^*)\in \cR$ with 
\begin{equation}
\label{eq:large t}
t(x^*)>\sqrt{M_0^{1/3}} =2M_1.
\end{equation} 
Then the
orbit  $\Gamma(x^*,y^*,z^*)$ of this triple has at least $2M_1$ elements. Let
$M$ be the cardinality of the set $\cX$ of projections 
along the first components of all triples  $(x,y,z) \in  \Gamma(x^*,y^*,z^*)$.  
Since the orbits are closed under the permutation of coordinates, 
and permutations of the triples
$$
(x^*, u_n, u_{n+1}), \qquad n = 1,   \ldots, t(x^*),
$$ 
where the sequence $u_n$ is defined as in~\eqref{eq:bin rec} with 
respect to $(x^*,y^*,z^*)$,  produce the same projection no more than twice
we obtain 
\begin{equation}
\label{eq:M and t}
M \ge \frac{1}{2}  t(x^*).
\end{equation}
Recalling~\eqref{eq:large t}, we obtain 
\begin{equation}
\label{eq:large M}
M \ge M_1 > \exp\((\log p)^{2/3+\ve/2}\).
\end{equation}
Using that $(x,y,z)\not\in \cC_p$, we notice, that by the bound~\eqref{eq:except set}  
\begin{equation}
\label{eq:small M}
M = p^{o(1)}.
\end{equation}

By Lemma~\ref{lem:tauz}, applied with $\gamma = 2/3+\ve/2$ and the 
inequalities~\eqref{eq:large M} we have 
\begin{align*} 
\sum_{\substack{t\le M^{3/4+\ve/4}\\ t\mid p^2-1}} t 
& \le M^{3/4+\ve/4} \tau_{M^{3/4+\ve/4}}(p^2-1)\\
& =  M^{3/4+\ve/4} M^{(3/4+\ve/4)(1/3-\varepsilon/2) + o(1)} \\
& =  M^{1 - \ve/24 -\ve^2/8 + o(1)} =o(M).
\end{align*} 
For $t\mid p^2-1$
we denote $g(t)$ the number of $x\in \cX$ with $t(x)=t$. Since
$$\sum_{t\mid p^2-1}  g(t) =M$$
and $g(t)<t$ for any $t$, we conclude that
$$\sum_{\substack{t > M^{3/4+\ve/4}\\ t\mid p^2-1}}  g(t) = M + o(M)  .
$$
Next, the same argument as  used in the bound~\eqref{eq:M and t}
implies that  $g(t)=0$ for $t>2M$. Applying Lemma~\ref{lem:tauz} and the 
inequalities~\eqref{eq:large M}
again we see that for
some integer  $t_0\mid p^2-1 $ with  
\begin{equation}
 \label{eq:t and M}
2M \ge t_0 > M^{3/4+\ve/4}
\end{equation}
we have
\begin{equation}
 \label{estg}
\begin{split}
g(t_0)& \ge \frac{1}{\tau_{2M}(p^2-1)}\sum_{\substack{t > M^{3/4+\ve/3}\\ t\mid p^2-1}}  g(t) \\
& =  \frac{ M + o(M) }{\tau_{2M}(p^2-1)}  \ge   M^{2/3+\ve/2+o(1)}\ge M^{2/3+\ve/3}, 
\end{split}
\end{equation}
provided that $p$ is large enough. 

Let $\cL$ be the set of $x \in \cX$ with $t(x)=t_0$
thus 
\begin{equation}
 \label{eq: set L}
\# \cL = g(t_0).
\end{equation} 
For each $x \in \cL$ we fix some 
$y,z \in \F_p$ such $(x,y,z)\in\Gamma (x^*,y^*,z^*)$
and again consider   the sequence 
$u_n$, $n=1, 2,\ldots $,  given by~\eqref{eq:bin rec}
and of period $t(x)=t_0$, so we consider the set 
$$
\cZ(x) = \{u_n~:~n =1, \ldots, t_0\}.
$$
Let $\cH$ be the subgroup of $\F_{p^2}^*$ of order $t_0$,
and $\xi(x)$ satisfy the equation $3x=\xi(x)+\xi(x)^{-1}$. One
can easily check, using an explicit expression for binary recurrence sequences
via the roots of the characteristic polynomial, that
$$
\cZ(x) = \left\{\alpha(x) u+\frac{r (x)}{\alpha(x) u}~:~u\in \cH\right\},
$$
where 
$$r(x)=\frac{(\xi(x)^2+1)^2}{9(\xi(x)^2-1)^2},$$ and
$\alpha(x)\in\F_{p^2}^*$.
If $\xi=\xi_0$ satisfies the equation
$$r= \frac{(\xi^2+1)^2}{9(\xi^2-1)^2},$$
then other solutions are $-\xi_0, 1/\xi_0, -1/\xi_0$.
Moreover, $3x=\xi+\xi^{-1}$
can take at most two values whose sum is $0$.
Since every value is taken at most twice among the elements 
of the sequence $u_n$, $n =1, \ldots, t_0$, we have 
 \begin{equation}
 \label{eq:Z and t0}
\# \cZ(x)  \ge \frac{1}{2} t_0. 
\end{equation}

If we have   $x_1, x_2\in \cL$ with $x_1\neq \pm x_2$
(the last condition guarantees that the orbits $\cZ(x_1)$
and $\cZ(x_2)$ do not coincide),
then Lemma~\ref{lem:small_intersect} gives us the bound 
$$
\#(\cZ(x_1)\bigcap \cZ(x_2))\ll \frac{t_0^2}{p} + t_0^{2/3}. 
$$


We set 
 \begin{equation}
 \label{eq:h and t0}
h=\fl{c_0t_0^{1/3}}
\end{equation}
 Note that by  \eqref{estg} and~\eqref{eq: set L} we have $ \# \cL \ge M^{2/3+\ve/3} $ and recalling~\eqref{eq:t and M}
  we see that  $\# \cL  \ge h$ for an appropriate small $c_0>0$ and 
  provided that $p$ is sufficiently large.  Hence with $h$ as in~\eqref{eq:h and t0} and 
we can choose  $x_{1},\ldots,x_{h} \in \cL$ with
 $x_{i}\neq \pm x_{j}$ for $1\le i < j\le h$.    
 
We now  recall Lemma~\ref{lem:small_intersect}, which for every $j=1, \ldots, h$  we apply 
to the $j-1 < h$ intersections $\cZ(x_i)\cap \cZ(x_j)$ with $i =1, \ldots,j-1$.  

 Using that by~\eqref{eq:small M} and~\eqref{eq:t and M} we  see $t_0 = p^{o(1)}$.
Hence,  we  conclude that for $j=1,\ldots,h$ we have
 $$
\sum_{i=1}^{j-1} \#\(\cZ\(x_{j}\) \bigcap \cZ\(x_{i}\) \)  \ll h \max\{t_0^{2}/p, t_0^{2/3}\} \ll  h  t_0^{2/3}.
$$
Hence, for a sufficiently small $c_0$ in~\eqref{eq:h and t0}, recalling~\eqref{eq:Z and t0}  we derive
 $$
\sum_{i=1}^{j-1} \#\(\cZ\(x_{j}\) \bigcap \cZ\(x_{i}\) \)  \le t_0/4 \le  \frac{1}{2} \#\cZ\(x_{j}\) $$
which implies
$$
\#\(\cZ\(x_{j}\)\setminus\bigcup_{i=1}^{j-1}\cZ\(x_{i}\) \) \ge \frac{1}{2} \#\cZ\(x_{j}\) \ge t_0/4.
$$
Therefore,
$$\# \bigcup_{j=1}^{h}\cZ\(x_{j}\) \ge t_0h/4\gg t_0^{4/3}$$
and thus, by~\eqref{eq:t and M}, we have
$$\# \bigcup_{j=1}^{h}\cZ\(x_{j}\) \ >M$$
provided that $t_0$ is large enough. This contradicts the
definition of $M$.

\subsection{Proof of Theorem~\ref{thm:lower}}

We  assume that  $p$ is large enough and  fix a  connected component 
$\cC$ of $\cM_p$. 

Let $\cX$ be the set of $x\in \F_p$ such that $(x,y,z)\in\cC$ for some 
$y,z$. If $t(x)>(\log p)^{7/9}$ for some $x\in\cX$, then $\cC$ contains at least 
$t(x)$ triples $(x,y,z)$ and the desired result easily follows. Thus, we  assume
that $t(x)\le (\log p)^{7/9}$ for all $x\in \cX$. In particular, for  
$x_1, x_2 \in \cX$ 
the bound of  Lemma~\ref{lem:small_intersect}  becomes 
$O\(\(t(x_1) t(x_2)\)^{1/3}\)$. 

We consider first the case where there exists $x_0\in\cX$ such that 
\begin{equation}
\label{restrt()}
(\log p)^{0.15}\le t(x_0) \le (\log p)^{1/3}
\end{equation}
(one can see from the argument below that the exponent $0.15$ can be replaced
by any constant in the open interval $(1/7, 1/6)$).

 With every $x_0$ satisfying~\eqref{restrt()}, we associate the $t(x_0)$-periodic sequence $\{u_j\}$
as in~\eqref{eq:bin rec}. By Lemma~\ref{lem:threeorders} 
 for any $j=1, 2, \ldots$ we have 
 $$
\max\{ t(u_j),t(u_{j+1})\} \ge \sqrt{t(u_j) t(u_{j+1})} \gg  (\log p)^{1/2}t(x_0)^{-1/2}.
$$
Hence, 
if we define 
$$
\vartheta(x_0) =  c (\log p)^{1/2}t(x_0)^{-1/2} \gg  (\log p)^{1/3}
$$
for an appropriate constant $c> 0$, then for any $j=1, 2, \ldots$ we have 
$$\max\{ t(u_j),t(u_{j+1})\} \ge \vartheta(x_0).$$ 
Therefore, there are at least $t(x_0)/2$ values $j$, $1\le j\le t(x_0)$,
such that $t(u_j)\ge \vartheta(x_0)$. Since there are at most two $j$ with 
the same $t(u_j)$, there is a set $\cY(x_0)\subset \{u_1,\dots,u_{t(x_0)}\}$
with $\# \cY(x_0) \ge t(x_0)/4$ and 
\begin{equation}
\label{eq:ty yY}
t(y)\ge \vartheta(x_0), \qquad y\in \cY(x_0).
\end{equation}

We say that $y$ is associated with $x$ if $(x,y,z)\in\cC$
for some $z$. By our construction, all elements of $\cY(x_0)$ are 
associated with  $x_0$. 

Let 
$$s=\fl{c_0(\vartheta(x_0))^{1/3}}$$
where $c_0$ is a small positive constant. By the first inequality  
from~\eqref{restrt()} we have $s\le t(x_0)/4$ (provided $c_0$ 
is small enough). Hence we can choose $s$ distinct 
elements $y_1,\dots,y_s\in \cY(x_0)$. We order them so that
$$t(y_1)\le\ldots\le t(y_s).$$

For every $i=1,\ldots,s$, there is a set $\cZ(y_i)$ of elements 
associated with $y_i$ such that  
\begin{equation}
\label{eq:Zy ty}
\# \cZ(y_i)\ge t(y_i)/4
\end{equation}
 and 
\begin{equation}
\label{eq:tz}
t(z)\gg (\log p)^{1/2}t(y_i)^{-1/2}
\end{equation}
for $z\in \cZ(y_i)$.

Now we use that due to Lemma~\ref{lem:small_intersect} and the bound~\eqref{eq:ty yY}, for any $1\le j<i\le s$
we have
$$
\#\(\cZ(y_i)\cap \cZ(y_j)\) \ll (t(y_i)t(y_j))^{1/3} \ll t(y_i)^{2/3} \ll  t(y_i) \vartheta(x_0)^{-1/3}.
$$ 
Taking into account the choice of $s$  and the bound~\eqref{eq:Zy ty},  we conclude that
$$\sum_{j<i}\#\(\cZ(y_i)\cap \cZ(y_j)\) \le\frac12\cZ(y_i),
$$
provided that $c_0$ is small enough. Hence, there are   subsets 
$\cW(y_i) \subseteq \cZ(y_i)$ such that 
$$\# \cW(y_i) \ge \frac{1}{2} \# \cZ(y_i)  \ge t(y_i)/8
$$
which are pairwise disjoined, that is
$$
\cW(y_i)\cap \cW(y_j) = \emptyset, \quad 1 \le j < i \le s.
$$

For any $i=1,\dots,s$ and $z\in \cW(y_i)$ we have $t(z)$ 
triples $(x,y,z)$ from $\cC$. Summing up the bound~\eqref{eq:tz} over $z\in\cW(y_i)$ we get 
$$
\sum_{z\in\cW(y_i)} t(z) \gg (\log p)^{1/2}t(y_i)^{1/2} \ge (\log p)^{1/2}\vartheta(x_0)^{1/2}
$$
triples from $\cC$. So,
$$\# \cC \gg s (\log p)^{1/2}\vartheta(x_0)^{1/2} \gg (\log p)^{1/2}\vartheta(x_0)^{5/6}
\gg (\log p)^{7/9}$$
as required.

Now we consider the case where no element $x_0\in\cX$ satisfies~\eqref{restrt()}.
By Lemma~\ref{lem:threeorders} there exists $x_1\in\cX$ with $t(x_1)\gg(\log p)^{1/3}$.
There are at least $ t(x_1)/2$ elements $y\in\cX$ associated with $x_1$. Among them there 
are at most $(\log p)^{0.3}$ elements $y$ with $t(y)<(\log p)^{0.15}$. Hence,
there is a set $\cY(x_1)$ of elements associated with $x_1$ such that 
$\# \cY(x_1)\ge t(x_1)/3\gg(\log p)^{1/3}$ and $t(y)>(\log p)^{1/3}$ for any $y\in \cY(x_1)$. 
We now define 
$$
s=\fl{c_1(\log p)^{1/9}},
$$ 
where $c_1$ is a small positive constant, and
take elements $y_1,\dots,y_s$ from $ \cY(x_1)$. The same argument as in the
first case shows again that 
$$
\# \cC\gg  s (\log p)^{2/3} \gg (\log p)^{7/9}.
$$ 
This completes the proof.

\section*{Acknowledgement}

The authors would like to thank Peter Sarnak for the encouragement and 
useful comments as well as to the  referee  for the careful reading and valuable  suggestions.  

This work was supported by the Australian Research Council Grants DP170100786 and DP180100201 (Shparlinski) and by the Russian Science Foundation  Grant 18-41-05003 (Vyugin).

\end{document}